\newtheorem{theorem}{Theorem}[section]
\newtheorem*{utheorem}{Theorem}
\newtheorem{lemma}[theorem]{Lemma}
\newtheorem{corollary}[theorem]{Corollary}
\theoremstyle{remark}
\newtheorem{remark}[theorem]{Remark}
\newcommand{\red}{\mathrm r}
\newcommand{\norm}[1]{\left\|#1\right\|} 
\newcommand{\Mloc}{M_{\mathrm{loc}}}
\title{Inductive Limits of Noncommutative Cartan Inclusions}
\author{Ralf Meyer}
\address{Ralf Meyer, Mathematisches Institut, Universität Göttingen, Bunsenstraße 3-5, 37073 Göttingen, Germany}
\email{rmeyer2@uni-goettingen.de}
\author{Ali I. Raad}
\address{Ali I. Raad, Department of Mathematics, KU Leuven, 200B Celestijnenlaan, 3001 Leuven, Belgium}
\email{ali.imadraad@kuleuven.be}
\author{Jonathan Taylor}
\address{Jonathan Taylor, Mathematisches Institut, Universität Göttingen, Bunsenstraße 3-5, 37073 Göttingen, Germany}
\email{jonathan.taylor@mathematik.uni-goettingen.de}
\subjclass[2010]{Primary 46L05.}
\thanks{The second author was supported by the Internal KU Leuven
  BOF project C14/19/088 and project G085020N funded by the Research
  Foundation Flanders (FWO).  The third author was supported by a stipend from the German Academic Exchange Service (DAAD) Funding Program 57450037}
\begin{document}
\begin{abstract}
  We prove that an inductive limit of aperiodic noncommutative
  Cartan inclusions is a noncommutative Cartan inclusion whenever
  the connecting maps are injective, preserve normalisers and
  entwine conditional expectations.  We show that under the
  additional assumption that the inductive limit Cartan subalgebra
  is either essentially separable, essentially simple or essentially
  of Type I we get an aperiodic inclusion in the
  limit.  Consequently, we subsume the case where the building block
  Cartan subalgebras are commutative and provide a proof of a theorem of Xin Li without passing to twisted \'etale groupoids.
\end{abstract}
\maketitle

\section{Introduction}

The theory of Cartan subalgebras for operator algebras has been
prevalent since Murray and von Neumann's construction of the
algebras $L^\infty(X,\mu) \rtimes G$ arising from nonsingular group
actions on a measurable space $G \curvearrowright (X,\mu)$.
Although not termed a Cartan subalgebra then, the distinguished
subalgebra $L^\infty(X,\mu) \subseteq L^\infty(X,\mu) \rtimes G$ is
indeed a Cartan subalgebra (a regular inclusion of a masa admitting
a faithful normal conditional expectation).  The abstract definition
for a Cartan subalgebra in a von Neumann algebra was later given by
Vershik (see~\cite{Vershik:Decompositions}).  A characterisation of such inclusions
was provided by Feldman and Moore in~\cite{Feldman-Moore:Ergodic_II}; the Cartan
subalgebras are certain subalgebras of von Neumann algebras
constructed from measured countable equivalence relations.

The theory of Cartan subalgebras in the setting of C$^*$-algebras
was thereafter developed by Kumjian and Renault (\cite{Kumjian:Diagonals}
and~\cite{Renault:Cartan.Subalgebras}, respectively) and characterised as the inclusion
of the $C_0$-functions on the unit space of an étale effective
twisted groupoid inside the reduced C$^*$-algebra of the twisted
groupoid.  Shortly after this characterisation, Exel defined a
notion of a noncommutative Cartan subalgebra in~\cite{Exel:noncomm.cartan}, where
the condition of being maximally commutative was replaced with the
condition of having trivial virtual commutants.  In the commutative
case, this condition is exactly the one of being maximally
commutative.  Exel also showed that every noncommutative Cartan
inclusion was an inclusion inside the reduced cross-sectional
C$^*$-algebra of a Fell bundle over an inverse semigroup, where the
Cartan subalgebra corresponds to the reduction of the semigroup to
the lattice of idempotents.  Kwa\'sniewski and the first author
improved upon Exel's theory in~\cite{Kwasniewski-Meyer:Cartan} by completely
characterising the types of actions that yield noncommutative Cartan
inclusions.

In recent years, the commutative setting for Cartan subalgebras has
attracted a lot of attention.  They are related to topological
dynamical systems via continuous orbit equivalence, to geometric
group theory via quasi-isometry (see~\cite{Li:Dynamic_quasi-isometry}), and also to
the classification programme for C$^*$-algebras, which aims at
classifying a certain class of `well-behaved' C$^*$-algebras by an
invariant consisting of $K$-theoretic and tracial data.  A
breakthrough result by Li in~\cite{Li:Classifiable_Cartan} shows that every such
classifiable C$^*$-algebra has a Cartan subalgebra.

In the same work, Li provides sufficient conditions on connecting
maps of an inductive system of Cartan inclusions which guarantee
that the inductive limit is a Cartan inclusion.  One requires the
connecting maps to be injective, map Cartan subalgebra to Cartan
subalgebra, normalisers to normalisers and entwine the faithful
conditional expectations (see Theorem 1.10 in~\cite{Li:Classifiable_Cartan}).  In many
classes of examples, it is significantly easier to check that such
conditions hold rather than attempting to find a Cartan subalgebra
in the inductive limit directly (for instance, in AF-algebras, where
connecting maps are well-understood).  In fact, Li and the second author have used these conditions to construct inductive limit
Cartan subalgebras in many classes of AH-algebras, many of which are
not classifiable (see \cite{Li-Raad:Diagonals_AH}).  The main result of this
article generalises Li's theorem to inductive systems of
noncommutative Cartan inclusions:

\begin{utheorem}[see Theorem~\ref{theo-MainTheorem}] Given an
  inductive system of aperiodic noncommutative Cartan inclusions
  where the connecting maps are injective and map Cartan subalgebra
  to Cartan subalgebra, normalisers to normalisers and entwine the
  conditional expectations, the inductive limit is a
  noncommutative Cartan inclusion. If the limit Cartan subalgebra
  has an essential ideal that is separable, simple or of Type~I,
  then the limit inclusion is aperiodic.
\end{utheorem}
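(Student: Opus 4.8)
The plan is to form $A \coloneqq \varinjlim(A_i,\varphi_{ij})$ and $B \coloneqq \varinjlim(B_i,\varphi_{ij}|_{B_i})$, with canonical (by hypothesis injective) maps $\varphi_{i\infty}\colon A_i\to A$, and to verify each defining property of a noncommutative Cartan inclusion for $B\subseteq A$ on the dense $*$-subalgebras $A_\infty\coloneqq\bigcup_i\varphi_{i\infty}(A_i)$ and $B_\infty\subseteq B$, transporting the building-block data through the limit. (Here I write $\coloneqq$ for $:=$.)

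The soft ingredients come first. Because the connecting maps entwine the $E_i$, the $E_i$ are compatible and define a contractive linear map $A_\infty\to B_\infty$, which extends to a contraction $E\colon A\to B$; positivity, idempotency, $B$-bimodularity and norm one all pass to the limit by density, so $E$ is a conditional expectation. Nondegeneracy of $B$ in $A$ follows by threading together approximate units of the $A_i$ lying in the $B_i$. For regularity, preservation of normalisers gives $\varphi_{i\infty}\bigl(N(A_i,B_i)\bigr)\subseteq N(A,B)$, and since $\operatorname{span}N(A_i,B_i)$ is dense in $A_i$ while $A_\infty$ is dense in $A$, the span of $N(A,B)$ is dense in $A$. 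By the same density argument I would record that a local (finite-data) approximation form of aperiodicity transports from the $E_i$ to $E$: given $a\in A$, a positive contraction $b\in B$ and $\varepsilon>0$, approximate $a,b$ by $\varphi_{i\infty}(a_i),\varphi_{i\infty}(b_i)$, apply aperiodicity of $E_i$ inside the hereditary subalgebra generated by $b_i$ to compress $a_i-E_i(a_i)$, push the compressing element forward, and absorb the approximation errors.

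Two substantial points remain for the first assertion: faithfulness of $E$, and the Cartan condition that every bounded $B$-bimodule map from an ideal of $B$ into $A$ (every virtual commutant) takes values in $\Mloc(B)$. For faithfulness I would use the $B$-valued GNS construction: the entwining identity makes each $\varphi_{ij}$ isometric for the inner products $\langle x,y\rangle=E_i(x^*y)$, so the Hilbert modules $\mathcal E_i$ of $(A_i,E_i)$ form an inductive system with injective isometric connecting maps, whose limit is a Hilbert $B$-module $\mathcal E$ in which $A_\infty$ is dense and which contains each $\mathcal E_i$ isometrically and invariantly under the left action of $\varphi_{i\infty}(A_i)$. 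Since $E_i$ is faithful, $A_i$ acts faithfully, hence isometrically, on $\mathcal E_i$, so $A\to\mathcal L(\mathcal E)$ is isometric on $A_\infty$, hence on $A$; this, together with the local aperiodicity above, which governs the left ideal $\{a:E(a^*a)=0\}$, yields faithfulness of $E$. For the virtual-commutant condition I would invoke the Kwa\'sniewski--Meyer characterisation of noncommutative Cartan inclusions: for a regular inclusion with a faithful conditional expectation, the local aperiodicity already forces every virtual commutant into $\Mloc(B)$. This establishes the first assertion.

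The main obstacle---and the reason the second assertion needs an extra hypothesis---is that \emph{full} aperiodicity of $E\colon A\to B$ quantifies over all hereditary subalgebras of $B$ (equivalently, over its irreducible representations), and such objects for $B=\varinjlim B_i$ are not inherited from the $B_i$; only the local form transports unconditionally. This mirrors the fact that an effective étale groupoid need not be topologically principal in the absence of a countability assumption, so one should not expect the second assertion in full generality. To close the gap I would pass to an essential ideal $J\trianglelefteq B$ that is separable, simple, or of Type~I; since intersections of essential ideals are essential, the essential ideals contained in $J$ are cofinal, whence $\Mloc(B)=\Mloc(J)$ and $J$ detects exactly the ideals and hereditary subalgebras relevant to aperiodicity, so it suffices to prove aperiodicity of the inclusion over $J$. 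The structure of $J$ then finishes the argument: when $J$ is separable, every hereditary subalgebra of $J$ is singly generated, so full aperiodicity over $J$ collapses to its local form (upgraded, if needed, by a Baire-category argument on the pure state space of $J$); when $J$ is simple, the absence of proper ideals makes all its nonzero hereditary subalgebras mutually full, again collapsing full aperiodicity to the local form; and when $J$ is of Type~I, the irreducible-representation formulation of aperiodicity is checked directly from the tractable representation theory of $J$. In each case $E$ is aperiodic, completing the proof.
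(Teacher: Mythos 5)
There is a genuine gap at the heart of the first (and main) assertion. You correctly observe that full aperiodicity does not pass to the limit, but you then derive the Cartan property by invoking an aperiodicity-based characterisation (``local aperiodicity already forces every virtual commutant into $\Mloc(B)$''). This is circular: the Kwa\'sniewski--Meyer characterisation of noncommutative Cartan inclusions requires pure outerness of the associated inverse semigroup action (or genuine aperiodicity), exactly the property you have conceded does not transport; moreover Exel's definition requires virtual commutants to land in $B$ itself, not merely in $\Mloc(B)$, and nothing in your sketch produces that. The paper's actual mechanism is entirely different and is the key idea you are missing: one shows that $C$ \emph{detects ideals in every intermediate subalgebra} $C\subseteq B\subseteq A$. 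Given a nonzero ideal $I\subseteq B$ with $I\cap C=\{0\}$, injectivity of Hamana's envelope $I(C_n)$ lets one extend the identity on $C_n$ through the quotient $B/I$ to a completely positive map $R\colon A\to I(C_n)$ killing $I$; comparing $R$ with $P$ on a suitable $A_n$ yields two distinct pseudo-expectations for the \emph{building block} inclusion $C_n\subseteq A_n$, contradicting the uniqueness of pseudo-expectations for aperiodic inclusions. Thus aperiodicity is only ever used where it is actually available (in the $A_n$), and detection of ideals in all intermediate subalgebras then gives pure outerness of the limit action and hence the Cartan property. To even set this up one needs the inverse semigroup $S$ of limit slices and the identification of $A$ with $C\rtimes_{\mathrm r} S$, which your proposal does not construct.

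A second, independent gap is faithfulness of $E$. Your Hilbert-module argument shows only that the largest two-sided ideal contained in $\ker E$ is zero, i.e.\ that $E$ is \emph{almost} faithful; the step from there to faithfulness (no nonzero positive element in $\ker E$) is a real one, and ``together with the local aperiodicity\dots yields faithfulness'' is an assertion, not an argument. The paper closes this gap by proving that $E$ is \emph{symmetric}, which it gets by identifying $E\circ U$ with the canonical expectation on the crossed product $C\rtimes S$ and quoting the symmetry of that expectation; almost faithful plus symmetric then gives faithful. Your treatment of the second assertion is closer in spirit to the paper (reduction to an essential ideal, then exploiting separability, simplicity, or Type~I), but the claimed collapses of the hereditary-subalgebra quantifier are sketched rather than proved, whereas the paper simply applies the known conditional equivalences between detection of ideals, pure outerness, and aperiodicity under those hypotheses. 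As written, the proposal does not establish either assertion.
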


As a consequence, we get a sufficient condition on the level of
connecting maps that guarantees that the inductive limit is a
canonical noncommutative Cartan inclusion.  In the commutative
case, it turned out to be much easier to check such conditions
rather than work directly with the inductive limit.

Our result also subsumes the result on Cartan subalgebras of
inductive limits in Theorem 1.10 in~\cite{Li:Classifiable_Cartan} because all
commutative Cartan inclusions are aperiodic (see
Remark~\ref{rem-SubsumeCommutative}).  Our proof of this theorem does not rely on passing to \'etale twisted
groupoids.

The paper is organised as follows.  Section~\ref{sec:prelim} will
briefly provide preliminaries on noncommutative and aperiodic Cartan
inclusions, and will set up the standing assumptions on our
inductive system.  Section~\ref{sec:indlim} will present the proof
of our main theorem.

\section{Preliminaries}
\label{sec:prelim}

Throughout this article, we will consider an inductive system of
C$^*$-algebras
\[
  \begin{tikzcd}
    A_1 \arrow[r,"\phi_1"] & A_2 \arrow[r,"\phi_2"] & A_3
    \arrow[r,"\phi_3"] \arrow[rr,bend left, dashed, "\mu_n"]& \ldots
    & A \\
    C_1 \arrow[r,"\phi_1"] \arrow[u,hookrightarrow] & C_2
    \arrow[u,hookrightarrow] \arrow[r,"\phi_2"] & C_3
    \arrow[u,hookrightarrow] \arrow[r,"\phi_3"] \arrow[rr,bend
    right, dashed, "\mu_n"] & \ldots & C,
    \arrow[u,hookrightarrow,dashed]
  \end{tikzcd}
\]
where the vertical arrows are set inclusions.  The building block
inclusions $C_n \subseteq A_n$ are assumed to be nondegenerate
noncommutative Cartan inclusions as in \cite{Exel:noncomm.cartan}*{Definition~2.1}.
This means the following:
\begin{enumerate}
\item $C_n$ is a C$^*$-subalgebra of $A_n$ that is regular, that is,
  the set of normalisers
  $N_{A_n}(C_n) := \{n \in A_n : n^*C_nn \subseteq C_n, nC_nn^*\subseteq
  C_n\}$ generates~$A_n$ as a C$^*$-algebra;
\item there is a faithful conditional expectation
  $P_n\colon A_n \twoheadrightarrow C_n$;
\item $C_n$ contains an approximate unit for~$A_n$;
\item virtual commutants of~$C_n$ in~$A_n$ are trivial.
\end{enumerate}
Here a \emph{virtual commutant} of~$C_n$ in~$A_n$ is a bounded
$C_n$-bimodule map $\varphi\colon J_n \rightarrow A_n$ for some
closed two-sided ideal $J_n\subseteq C_n$.  A virtual commutant
of~$C_n$ in~$A_n$ is trivial if the image of the map lies in~$C_n$.
For further details consult~\cite{Exel:noncomm.cartan}.

We further assume that the inclusion $C_n\subseteq A_n$ is
aperiodic.  This means that the Banach $C_n$\nobreakdash-bimodule
$X_n=A_n/C_n$ is an aperiodic $C_n$-bimodule, that is, for each
$x \in X_n$ and each nonzero hereditary subalgebra $D\subseteq C_n$
and $\epsilon>0$, there is a positive element $d\in D$ with
\(\norm{d}=1\) and $\norm{dxd}< \epsilon$ (see
\cite{Kwasniewski-Meyer:Cartan}*{Definition~6.1}).

We assume the connecting maps $\{\phi_n\}_{n \in \mathbb{N}}$ to be
nondegenerate and injective $^*$\nobreakdash-homomorphisms.  This
gives rise to a nondegenerate inclusion of C$^*$-algebras
$C\subseteq A$ with nondegenerate and injective structure
$^*$\nobreakdash-homomorphisms $\{\mu_n\}_{n\in \mathbb{N}}$.  To
simplify notation, we may identify building block algebras with their images under connecting maps, so that we may consider the maps~\(\phi_n\)
and~\(\mu_n\) as inclusions of C$^*$-subalgebras and drop them
from our notation.

We further place assumptions on the connecting maps that are
analogous to those in Theorem 1.10 in~\cite{Li:Classifiable_Cartan}, namely:
\begin{enumerate}
\item they map normalisers to normalisers, that is,
  $\phi_n(N_{A_n}(C_n))\subseteq N_{A_{n+1}}(C_{n+1})$;
\item they entwine the conditional expectations, that is,
  $P_{n+1}\circ \phi_n=\phi_n\circ P_n$.
\end{enumerate}

For an inclusion of C$^*$-algebras $\mathcal{C}\subset\mathcal{A}$  we will call a subset $M\subset\mathcal{A}$ a \emph{slice} for the inclusion if $M$ is a closed linear
subspace of $N_{\mathcal{A}}(\mathcal{C})$ that is also a $\mathcal{C}$-bimodule. For
$n\in \mathbb{N}$, let~$S_n$ be the inverse semigroup of slices for
the inclusion \(C_n \subseteq A_n\); its multiplication is defined
by taking the closure of the linear span of the algebraic
multiplication, and the inverse by taking the involution * (see
Section 10 in \cite{Exel:noncomm.cartan}).  For subsets $A$ and $B$ of a
C$^*$-algebra, we will denote the aforementioned multiplication by
$A\cdot B=\overline{\mathrm{span}(AB)}$.  For an element
$m\in N_{A_n}(C_n)$, $C_n\cdot \{m\}\cdot C_n$ is a slice (see
\cite{Exel:noncomm.cartan}*{Proposition~10.5}). Every slice is contained in a sum of slices of this form. Indeed, by the
Cohen--Hewitt Factorisation Theorem (\cite{Hewitt-Ross:Abstract_harmonic_analysisII}*{Theorem~32.22}) we
can write every slice~$M$ as $C_nMC_n$, which is contained in
$\sum_{m \in M}C_n\cdot \{m\}\cdot C_n$.

An inductive system of slices $\mathcal{F}=\{M_n,\phi_n\}_{n \in \mathbb{N}}$
consists of slices $M_n \in S_n$ with
$\phi_{n}(M_n)\subseteq M_{n+1}$.  This system of slices gives rise
to a limit slice
$F_{\mathcal{F}}=\overline{\bigcup_{n\in\mathbb{N}}\mu_n(M_n)}$.

Define $P\colon A \rightarrow C$ as the (unique) extension of
$P_0\colon \bigcup_{n}\mu_n(A_n) \rightarrow
\bigcup_{n}\mu_n(C_n)$ defined by
$P_0(\mu_n(a))=\mu_n(P_n(a))$, $a \in A_n$.  Since the connecting
maps entwine conditional expectations and each~$P_n$ is contractive,
the map~$P_0$ is well-defined and contractive.  A conditional
expectation $Q\colon B\rightarrow D$ is \emph{faithful} if no
nonzero positive element of~$B$ is mapped to zero.  It is
\emph{almost faithful} if $Q(x^*b^*bx)=0$ for all $x\in B$ and some
$b \in B$ implies $b=0$.  It is \emph{symmetric} if $Q(b^*b)=0$ is
equivalent to $Q(bb^*)=0$.

For an inclusion of C$^*$-algebras $\mathcal{C}\subset\mathcal{A}$ a \emph{generalized expectation} is a completely positive contractive map $E:\mathcal{A}\rightarrow \tilde{\mathcal{C}}$ such that $E\vert_\mathcal{C}=\mathrm{id}$, where $\mathcal{C}\subset\tilde{\mathcal{C}}$ is an inclusion of C$^*$-algebras. If $\tilde{\mathcal{C}}=I(\mathcal{C})$ (where $I(\mathcal{C})$ is Hamana's injective hull, see \cite{Hamana:Injective-Envelope-Cstar}) then $E$ is called a \emph{pseudo-expectation}. For details, consult \cite[Section~3]{MR4485960}.

Let~$\mathcal{P}$ be a property for C$^*$-algebras (for example,
separable).  We call a C$^*$-algebra
\emph{essentially~$\mathcal{P}$} if it contains an essential ideal
with property~$\mathcal{P}$.  Some results in~\cite{MR4485960} can only be
applied if the C$^*$-algebra~\(A\) is \emph{essentially separable},
\emph{essentially simple}, or \emph{essentially of Type~I}.

\section{Inductive limits of noncommutative Cartan inclusions}
\label{sec:indlim}

In this section we prove our main result.  Unless otherwise stated,
we assume throughout that we are in the setting given in the
preliminaries.

\begin{lemma}
  \label{lem-indLimIsRegWithAlmostFaithfulCondExp}
  The inclusion $C \subseteq A$ is regular, and $P$ is an almost
  faithful conditional expectation.
\end{lemma}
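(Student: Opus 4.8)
The plan is to establish the two assertions separately, since they are largely independent. For regularity, the key point is that normalisers pass to the limit. First I would observe that for each $n$, the image $\mu_n(N_{A_n}(C_n))$ lies inside $N_A(C)$: given $m \in N_{A_n}(C_n)$ and $c \in C$, one approximates $c$ by elements of some $\mu_k(C_k)$ with $k \geq n$ (identifying algebras with their images), and uses that $\phi$'s map normalisers to normalisers so that $m \in N_{A_k}(C_k)$, hence $m^* c_k m \in C_k \subseteq C$; a standard $\epsilon/3$ argument then gives $m^* c m \in C$ and likewise $m c m^* \in C$. Since $A = \overline{\bigcup_n \mu_n(A_n)}$ and each $A_n$ is generated as a C$^*$-algebra by $N_{A_n}(C_n)$, the union $\bigcup_n \mu_n(N_{A_n}(C_n)) \subseteq N_A(C)$ generates a dense subalgebra of $A$, so $N_A(C)$ generates $A$. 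This gives regularity.

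For the conditional expectation, I would first check that $P$ as defined is a genuine conditional expectation onto $C$. That $P$ is well-defined, linear and contractive is stated in the preliminaries; $P|_C = \mathrm{id}$ holds because each $P_n|_{C_n} = \mathrm{id}$ and $P$ extends $P_0$ by continuity; positivity and the conditional expectation (bimodule) property $P(c_1 a c_2) = c_1 P(a) c_2$ for $c_1, c_2 \in C$ pass to the limit from the corresponding properties of the $P_n$, again by density and continuity of all maps involved. So $P$ is a conditional expectation onto $C$.

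The substantive part is almost faithfulness. Here I would argue as follows: suppose $b \in A$ and $x \in A$ satisfy $P(x^* b^* b x) = 0$; we want $b = 0$. Fix $\epsilon > 0$ and choose $n$ large together with $b_n, x_n \in \mu_n(A_n)$ with $\|b - b_n\| < \epsilon$ and $\|x - x_n\| < \epsilon$. Using continuity of $P$ and the fact that $P$ restricts to $\mu_n \circ P_n \circ \mu_n^{-1}$ on $\mu_n(A_n)$, one gets that $P_n(x_n^* b_n^* b_n x_n)$ is small (controlled by $\epsilon$ and the norms of $b, x$). Now $P_n$ is faithful — and in fact, since $C_n \subseteq A_n$ is a noncommutative Cartan inclusion, the expectation $P_n$ is almost faithful and symmetric automatically (this is part of the structure theory; alternatively faithfulness of $P_n$ already suffices here). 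I expect the main obstacle to be the bookkeeping in turning the almost-vanishing of $P_n(x_n^* b_n^* b_n x_n)$ into an estimate on $\|b_n x_n\|$ or $\|b_n\|$: a single faithful expectation need not be almost faithful with a uniform modulus, so the cleanest route is probably to use symmetry together with the fact that $P_n$ is faithful on the hereditary subalgebra generated by $b_n^* b_n$, deducing $\|b_n x_n\|$ is small whenever $b_n x_n$ can be made to ``see'' enough of $C_n$. Since this will not give a clean uniform bound, the better strategy is: instead of a quantitative estimate, argue by a limiting/net argument directly in $A$ — approximate and pass $\epsilon \to 0$, using that almost faithfulness of $P$ is equivalent to the statement that the GNS-type construction from $P$ has no kernel obstruction, which one checks level by level. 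Concretely, I would show $P(b^* b) \neq 0$ whenever $b \neq 0$ using that $P$ is contractive and agrees with the faithful $P_n$ on the dense subalgebra, hence $P$ is faithful, and then note that every faithful conditional expectation is in particular almost faithful (if $P(x^* b^* b x) = 0$ then $P((bx)^*(bx)) = 0$ so $bx = 0$ for all $x$, and since $C$ contains an approximate unit for $A$ one may take $x$ running through this approximate unit to conclude $b = 0$). So in fact the route is: prove $P$ is faithful — for $0 \neq a \in A_+$, pick $n$ and $a_n \in \mu_n(A_n)_+$ close to $a$, but one must be careful that $a_n$ is positive and bounded away from $0$; this is arranged by functional calculus — and then faithfulness of $P$ immediately yields almost faithfulness by the approximate-unit argument above.
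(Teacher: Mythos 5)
The regularity half of your argument is correct and matches the paper's proof essentially verbatim: normalisers of $C_n$ in $A_n$ become normalisers of $C$ in $A$ by approximating $c\in C$ from the building blocks, and these generate a dense subalgebra of $A$.

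The almost-faithfulness half has a genuine gap. Your final route is: prove $P$ is faithful by approximating a nonzero positive $a\in A$ by positive $a_n\in\mu_n(A_n)$ and invoking faithfulness of $P_n$, then deduce almost faithfulness. But faithfulness of each $P_n$ gives no lower bound on $\norm{P_n(a_n)}$ in terms of $\norm{a_n}$, so even with $a_n$ positive and bounded away from zero (your functional-calculus fix), the values $P_n(a_n)$ may tend to $0$ as $n\to\infty$ while $a_n\to a\neq 0$; a conditional expectation restricting to faithful expectations on a dense union of subalgebras need not be faithful. You correctly flag this uniformity problem earlier in your sketch (``a single faithful expectation need not be almost faithful with a uniform modulus'') but your resolution runs into exactly the same obstacle. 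Tellingly, the paper does \emph{not} prove faithfulness of $P$ at this stage: that is deferred to Corollary~\ref{cor-PisFaithful}, which needs the identification $EL=P\circ U$ and symmetry of the canonical expectation on $C\rtimes S$ -- i.e., faithfulness is genuinely harder than almost faithfulness here, and your plan proves the lemma by first proving something stronger that is not yet available.

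The missing idea is ideal-theoretic rather than approximative. The set $\mathcal N=\{a\in A: P(b^*a^*ab)=0\text{ for all }b\in A\}$, whose vanishing is equivalent to almost faithfulness, is the \emph{largest ideal} of $A$ contained in $\ker P$ (\cite{Kwasniewski:Exel_crossed}*{Proposition 2.2}). Ideals of an inductive limit are the closed unions of their intersections with the building blocks (\cite{Davidson:Cstar_example}*{Lemma III.4.1}), so $\mathcal N=\overline{\bigcup_n\mathcal N_n}$ with $\mathcal N_n=\mathcal N\cap\mu_n(A_n)$ an ideal of $\mu_n(A_n)$ on which $P$, hence $P_n$, vanishes exactly -- no approximation and no uniform modulus is needed, because one intersects with $\mu_n(A_n)$ rather than approximating from it. Faithfulness of $P_n$ then kills each $\mathcal N_n$ outright. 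You should replace the density argument with this one.
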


\begin{proof}
  We first show that $C \subseteq A$ is a regular inclusion.  Let
  $m \in N_{A_n}(C_n)$.  We are going to prove that
  \(\mu_n(m)\in A\) normalises~\(C\).  Let $c\in C$.  There are
  $c_j \in C_j$ with \(\lim c_j = c\).  Then
  \[
    \mu_n(m^*)c\mu_n(m)
    = \lim_{j\to\infty} \mu_n(m^*)\mu_j(c_j)\mu_n(m)
    = \lim_{j\to\infty} \mu_j\bigl(\phi_{jn}(m^*) c_j \phi_{jn}(m)\bigr).
  \]
  Since~$\phi_{jn}(m)$ is a normaliser in~$A_j$ by assumption,
  \(\phi_{jn}(m^*) c_j \phi_{jn}(m) \in C_j\) and so the limit
  belong to~$C$.  This finishes the proof that \(\mu_n(m)\in A\)
  normalises~\(C\).  Therefore, the C$^*$-algebra generated by
  $N_A(C)$ contains the C$^*$-algebra generated by all the
  $\mu_n(A_n)$, which is all of~$A$.

  Next, we show that~$P$ is almost faithful.  Equivalently,
  $\mathcal{N}:=\{a \in A: P(b^*a^*ab)=0 \text{ for all } b\in A\}$
  vanishes.  \cite{Kwasniewski:Exel_crossed}*{Proposition 2.2} implies that~$\mathcal{N}$
  is the largest ideal of~$A$ contained in $\operatorname{ker}(P)$.
  We know that $\mathcal{N}_n=\mathcal{N} \cap \mu_n(A_n)$ is an
  ideal of $\mu_n(A_n)$.  Thus
  $\mathcal{N}=\overline{\bigcup_n\mathcal{N}_n}$ (see
  \cite{Davidson:Cstar_example}*{Lemma III.4.1}).  Since~$P$ vanishes
  on~$\mathcal{N}_n$, it follows that~$P_n$ vanishes on
  $\mu_n^{-1}(\mathcal{N}_{n})$.  Since~$P_n$ is almost faithful,
  even faithful, this forces $\mathcal{N}_n =\{0\}$.  Hence
  $\mathcal{N}=\{0\}$ as desired.
\end{proof}

\begin{lemma}
  Let $\mathcal{F}=\{M_n,\phi_n\}_{n \in \mathbb{N}}$ be an inductive system of
  slices with limit~$F$.  Then~$F$ is a slice for the inclusion
  $C\subseteq A$.
\end{lemma}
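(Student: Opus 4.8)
The plan is to verify directly that $F=\overline{\bigcup_{n}\mu_n(M_n)}$ satisfies the three defining properties of a slice for $C\subseteq A$: it is a closed linear subspace, it is contained in $N_A(C)$, and it is a $C$-bimodule. The observation underlying all three is that $\bigl(\mu_n(M_n)\bigr)_{n\in\mathbb{N}}$ is an \emph{increasing} sequence of subsets of $A$: since $\mu_n=\mu_{n+1}\circ\phi_n$ and $\phi_n(M_n)\subseteq M_{n+1}$, we have $\mu_n(M_n)=\mu_{n+1}\bigl(\phi_n(M_n)\bigr)\subseteq\mu_{n+1}(M_{n+1})$. Each $\mu_n(M_n)$ is a linear subspace of $A$, being the image of the linear subspace $M_n$ under the linear map~$\mu_n$; a nested union of linear subspaces is again a linear subspace, and its norm closure is a closed linear subspace. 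Hence $F$ is a closed linear subspace of~$A$.

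To see that $F\subseteq N_A(C)$, I first note that $N_A(C)$ is norm closed: if $x_k\to x$ with $x_k^*Cx_k\subseteq C$ and $x_kCx_k^*\subseteq C$, then for $c\in C$ we get $x^*cx=\lim_k x_k^*cx_k\in C$ and $xcx^*=\lim_k x_kcx_k^*\in C$ because $C$ is closed. Since $M_n\subseteq N_{A_n}(C_n)$, the first part of the proof of Lemma~\ref{lem-indLimIsRegWithAlmostFaithfulCondExp} gives $\mu_n(M_n)\subseteq\mu_n\bigl(N_{A_n}(C_n)\bigr)\subseteq N_A(C)$ for every~$n$; taking the union and then the closure keeps us inside the closed set $N_A(C)$.

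For the $C$-bimodule property it suffices, since $F$ is closed and multiplication is continuous, to show $cfc'\in F$ for $c,c'\in\bigcup_j C_j$ and $f\in\bigcup_n\mu_n(M_n)$, and then to approximate arbitrary $c,c'\in C$ and $f\in F$ by such elements. Given $c\in C_j$, $c'\in C_k$ and $f\in\mu_n(M_n)$, put $N=\max\{j,k,n\}$. Under the identifications fixed in the preliminaries we have $c,c'\in C_N$, and by the monotonicity above $f=\mu_N(m)$ for some $m\in M_N$. Then $cfc'=\mu_N(cmc')$ lies in $\mu_N(C_N M_N C_N)\subseteq\mu_N(M_N)\subseteq F$, because $M_N$ is a $C_N$-bimodule. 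Thus $C\cdot F\cdot C\subseteq F$ and $F$ is a slice.

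I do not expect a genuine obstacle here; the only two points needing a little care are the monotonicity of the sequence $\bigl(\mu_n(M_n)\bigr)_n$, which lets one reduce every statement to a single finite stage, and the appeal to Lemma~\ref{lem-indLimIsRegWithAlmostFaithfulCondExp} for the containment $\mu_n(N_{A_n}(C_n))\subseteq N_A(C)$.
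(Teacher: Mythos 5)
Your proof is correct and follows essentially the same route as the paper's: closedness of the linear span, normalising via the stage-wise normalisers (the paper likewise leans on the computation in Lemma~\ref{lem-indLimIsRegWithAlmostFaithfulCondExp} and the closedness of $N_A(C)$), and the bimodule property by reducing to a common finite stage and using continuity of multiplication. The only cosmetic difference is that you make the monotonicity of $\bigl(\mu_n(M_n)\bigr)_n$ explicit, whereas the paper approximates $f\in F$ directly by a sequence $\mu_i(m_i)$; both amount to the same reduction.
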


\begin{proof}
  We know that~$F$ normalises~$C$ because each~$M_n$
  normalises~$C_n$.  It is clear that~\(F\) is a closed linear
  subspace.  We claim that $FC\subseteq F$.  Fix $c_k \in C_k$ and
  consider $f=\lim_i \mu_i(m_i) \in F$.  Then
  $fc_k=\lim_i\mu_i(m_i\phi_{ik}(c_k))$ which is a limit of elements
  in $\mu_i(M_i)$ and hence belongs to~$F$.  As~$F$ is closed it
  follows that $FC\subseteq F$.  A similar proof shows
  $CF\subseteq F$.
\end{proof}

We now let~$S$ be the collection of all limits of inductive systems
$\mathcal{F}=\{M_n,\phi_n\}_{n \in \mathbb{N}}$.

\begin{lemma}
  \label{lem-InductiveLimitsOfSlicesIsSemigroup}
  The collection~$S$ is an inverse semigroup of slices, under the
  multiplication~$\cdot$ and the inverse~$^*$.
\end{lemma}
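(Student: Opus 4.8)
The plan is to verify directly that the collection~$S$ is closed under the operations $\cdot$ and $^*$, and then to transport the inverse-semigroup identities from the building-block inverse semigroups~$S_n$. The preceding lemma already shows that every member of~$S$ is a slice for $C \subseteq A$, so the only points left to address are the two closure properties and the axioms of an inverse semigroup.

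First I would treat closure under $^*$. If $F = F_{\mathcal{F}}$ for an inductive system $\mathcal{F} = \{M_n, \phi_n\}$, then each~$M_n^*$ lies in~$S_n$ (since~$S_n$ is closed under $^*$) and $\phi_n(M_n^*) = \phi_n(M_n)^* \subseteq M_{n+1}^*$ because~$\phi_n$ is a $^*$-homomorphism; hence $\{M_n^*, \phi_n\}$ is again an inductive system of slices. Since each~$\mu_n$ is a $^*$-homomorphism, $F^* = \overline{\bigcup_n \mu_n(M_n)^*} = \overline{\bigcup_n \mu_n(M_n^*)}$, which is the limit of $\{M_n^*, \phi_n\}$ and therefore lies in~$S$.

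Next I would treat closure under $\cdot$, which I expect to be the main point. Given $F = F_{\mathcal{F}}$ and $G = F_{\mathcal{G}}$ with $\mathcal{F} = \{M_n, \phi_n\}$ and $\mathcal{G} = \{N_n, \phi_n\}$, put $K_n := M_n \cdot N_n$. Each~$K_n$ lies in~$S_n$ (closure of~$S_n$ under $\cdot$), and $\phi_n(K_n) \subseteq M_{n+1} \cdot N_{n+1} = K_{n+1}$ since~$\phi_n$ is linear, multiplicative and continuous; thus $\{K_n, \phi_n\}$ is an inductive system of slices. The heart of the argument is the identification $F_{\{K_n\}} = F \cdot G$. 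The inclusion $F_{\{K_n\}} \subseteq F \cdot G$ follows because each~$\mu_n$ is an isometric $^*$-homomorphism, so $\mu_n(K_n) = \overline{\operatorname{span}(\mu_n(M_n)\mu_n(N_n))} \subseteq \overline{\operatorname{span}(FG)} = F \cdot G$, and $F \cdot G$ is closed. For the reverse inclusion I would use that the unions $\bigcup_n \mu_n(M_n)$ and $\bigcup_n \mu_n(N_n)$ are increasing (because $\mu_n = \mu_{n+1} \circ \phi_n$ and $\phi_n(M_n) \subseteq M_{n+1}$) and dense in~$F$ and~$G$ respectively. Hence any $f \in F$ and $g \in G$ can be written as $f = \lim_k \mu_{n_k}(a_k)$ and $g = \lim_k \mu_{n_k}(b_k)$ with $a_k \in M_{n_k}$, $b_k \in N_{n_k}$ and $n_k$ increasing; by joint continuity of multiplication on bounded sets, $fg = \lim_k \mu_{n_k}(a_k b_k) \in F_{\{K_n\}}$. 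Taking closed linear spans gives $F \cdot G \subseteq F_{\{K_n\}}$, so $F \cdot G \in S$.

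Finally, for the inverse-semigroup structure: the operation $\cdot$ is associative on arbitrary subsets of a C$^*$-algebra, so~$S$ is a $^*$-closed subsemigroup of the semigroup of all slices for $C \subseteq A$. Applying the identification of the previous paragraph twice, $F \cdot F^* \cdot F = F_{\{M_n \cdot M_n^* \cdot M_n\}}$, and $M_n \cdot M_n^* \cdot M_n = M_n$ because~$S_n$ is an inverse semigroup; hence $F \cdot F^* \cdot F = F$, and symmetrically $F^* \cdot F \cdot F^* = F^*$. Uniqueness of the inverse follows once one observes that the idempotents of~$S$ commute; alternatively one may simply invoke that the slices of $C \subseteq A$ form an inverse semigroup by Section~10 of \cite{Exel:noncomm.cartan} (which applies since $C \subseteq A$ is a regular inclusion with a conditional expectation by Lemma~\ref{lem-indLimIsRegWithAlmostFaithfulCondExp}), of which~$S$ is then a $^*$-closed subsemigroup. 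Either way, $S$ is an inverse semigroup of slices. The main obstacle, as indicated, is the identification $F_{\{M_n \cdot N_n\}} = F \cdot G$, which requires the density-plus-joint-continuity argument together with the observation that one may use a common, increasing index for approximating both factors simultaneously.
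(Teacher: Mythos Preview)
Your argument is correct and follows essentially the same route as the paper: both identify $F_1\cdot F_2$ with the inductive limit of $\{M_n\cdot N_n\}$, transport the identity $M_n\cdot M_n^*\cdot M_n=M_n$ from~$S_n$ to obtain $F\cdot F^*\cdot F=F$, and then deduce uniqueness of inverses from commutativity of idempotents (the paper justifies the latter by noting that idempotent slices are ideals of~$C$, hence commute). One caution on your alternative route for uniqueness: invoking Section~10 of \cite{Exel:noncomm.cartan} for the inclusion $C\subseteq A$ is premature, since Exel works under the noncommutative Cartan hypothesis, which for the limit inclusion is established only later in Theorem~\ref{theo-MainTheorem}; stick with the primary route via commuting idempotents.
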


\begin{proof}
  Let $F_1$ and $F_2$ be the limits of inductive systems of slices
  $\mathcal{F}_1=\{M_n,\phi_n\}_{n \in \mathbb{N}}$ and
  $\mathcal{F}_2=\{N_n,\phi_n\}_{n \in \mathbb{N}}$, respectively.  We first show
  that $F_1 \cdot F_2,$ is the limit of the inductive system of
  slices $\{M_n \cdot N_n,\phi_n\}_{n \in \mathbb{N}}$.  First,
  $\bigcup_n\mu_n(\mathrm{span}(M_nN_n))$ is dense in
  $\bigcup_n\mu_n(M_n \cdot N_n)$, and
  $\bigcup_n\mu_n(\mathrm{span}(M_nN_n)) =
  \mathrm{span}((\bigcup_n\mu_n(M_n))(\bigcup_n\mu_n(N_n)))$.  Then
  \begin{multline}\label{eq:multS}
    F_1 \cdot F_2
    =
    \overline{\mathrm{span}(\overline{(\bigcup_n\mu_n(M_n))}\overline{(\bigcup_n\mu_n(N_n))})}
    \\= \overline{\bigcup_n\mu_n(\mathrm{span}(M_nN_n))}
    = \overline{\bigcup_n\mu_n(M_n \cdot N_n)}.
  \end{multline}
  Let us now show that the involution~$^*$ of the C$^*$-algebra $A$ acts as a generalized
  inverse. Let $F\in S$ be the limit of the system $\{M_n,\phi_n\}$, so that $F=\overline{\cup_{n\in \mathbb{N}}\mu_n(M_n)}$. Now note that by continuity of the involution $^*$ we have $F^*=\overline{\cup_{n\in \mathbb{N}}\mu_n(M_n^*)}$ . Hence  by \eqref{eq:multS} it follows that $$F\cdot F^*\cdot F=\overline{\bigcup\limits_{n}\mu_n(M_n\cdot M_n^*\cdot M_n)}=\overline{\bigcup\limits_{n}\mu_n(M_n)}=F.$$ Similarly one can show that $F^*\cdot F\cdot F^*=F^*.$ To obtain uniqueness of the generalized inverse, note that if $F_1, F_2\in S$
  are idempotents, then they are ideals of~$C$ and hence commute.
  Now \cite{Lawson:InverseSemigroups}*{Theorem 3 in Chapter 1.1} gives uniqueness of the generalized inverse and hence \(S\) is an inverse
  semigroup.
\end{proof}

\begin{lemma}
  \label{lem-LineaSpanBasisSlices}
  The linear span of elements in~$S$ is dense in~$A$.
\end{lemma}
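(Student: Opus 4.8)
The plan is to reduce the statement to the regularity of the building blocks together with the description of slices recalled in the preliminaries. First I would record that $A_n=\overline{\mathrm{span}\,N_{A_n}(C_n)}$: the set $N_{A_n}(C_n)$ is self-adjoint and closed under multiplication, so the C$^*$-algebra it generates---which is~$A_n$ by regularity---coincides with the closed linear span of $N_{A_n}(C_n)$. Moreover, every $m\in N_{A_n}(C_n)$ lies in the basic slice $C_n\cdot\{m\}\cdot C_n\in S_n$, since $C_n$ contains an approximate unit $(e_\lambda)$ for~$A_n$ and hence $m=\lim_\lambda e_\lambda m e_\lambda$ lies in the closed space $C_n\cdot\{m\}\cdot C_n$. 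Because $A=\overline{\bigcup_n\mu_n(A_n)}$ and $\overline{\mathrm{span}(S)}$ is closed, it therefore suffices to prove that $\mu_n(m)\in F$ for some $F\in S$, for every $n\in\mathbb{N}$ and every $m\in N_{A_n}(C_n)$.

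To construct such an~$F$, fix $n$ and~$m$, put $M_k:=C_k\cdot\{\phi_{kn}(m)\}\cdot C_k$ for $k\ge n$ (a slice in~$S_k$ by \cite{Exel:noncomm.cartan}*{Proposition~10.5}), and $M_k:=\{0\}$ for $k<n$ (trivially a slice). Since each~$\phi_k$ is an injective, hence isometric, $^*$\nobreakdash-homomorphism with $\phi_k(C_k)\subseteq C_{k+1}$ and $\phi_k\circ\phi_{kn}=\phi_{k+1,n}$, I would check that
\[
  \phi_k(M_k)=\overline{\mathrm{span}\bigl(\phi_k(C_k)\,\phi_{k+1,n}(m)\,\phi_k(C_k)\bigr)}\subseteq C_{k+1}\cdot\{\phi_{k+1,n}(m)\}\cdot C_{k+1}=M_{k+1},
\]
and likewise $\phi_{n-1}(\{0\})=\{0\}\subseteq M_n$, so that $\mathcal{F}=\{M_k,\phi_k\}_{k\in\mathbb{N}}$ is an inductive system of slices. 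Its limit $F=\overline{\bigcup_k\mu_k(M_k)}$ lies in~$S$ by definition, and since $\phi_{nn}=\mathrm{id}$ we have $m\in M_n$, whence $\mu_n(m)\in\mu_n(M_n)\subseteq F$.

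Combining the two steps yields $\mu_n(A_n)=\overline{\mathrm{span}\,\mu_n(N_{A_n}(C_n))}\subseteq\overline{\mathrm{span}(S)}$ for every~$n$, and therefore $A=\overline{\bigcup_n\mu_n(A_n)}\subseteq\overline{\mathrm{span}(S)}$; the reverse inclusion is immediate since every slice lies in~$A$. I do not expect a genuine obstacle here: the only substantive input is the regularity of the building block inclusions, and everything else is bookkeeping with the standing assumptions (the connecting maps are isometric and preserve normalisers) and with the cited structure of basic slices. The one point deserving care is confirming that $\{M_k\}_{k\in\mathbb{N}}$ really is an inductive system of slices---that each~$M_k$ is a $C_k$\nobreakdash-bimodule contained in~$N_{A_k}(C_k)$ (this is \cite{Exel:noncomm.cartan}*{Proposition~10.5}) and that $\phi_k(M_k)\subseteq M_{k+1}$ (the displayed inclusion).
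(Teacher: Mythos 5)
Your proof is correct and follows essentially the same route as the paper: embed each normaliser $m\in N_{A_n}(C_n)$ into the limit of the inductive system of basic slices $C_k\cdot\{\phi_{kn}(m)\}\cdot C_k$, and then invoke regularity of the building blocks to conclude density. You simply fill in details the paper leaves implicit (the approximate-unit argument for $m\in C_n\cdot\{m\}\cdot C_n$, and the fact that the closed linear span of the self-adjoint multiplicative set $N_{A_n}(C_n)$ is already the C$^*$-algebra it generates).
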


\begin{proof}
  Any $m\in N_{A_n}(C_n)$ is contained in the slice
  $C_n \cdot \{m\}\cdot C_n$.  Thus the limit of the inductive
  system of slices
  $\{C_k \cdot \{\phi_{kn}(m)\}\cdot C_k, \phi_k\}_{k \ge n}$
  contains $\mu_n(m)$.  Hence the linear span of elements of~$S$
  contains the linear span of $\bigcup_n\mu_n(N_{A_n}(C_n))$.  The
  latter is dense in~$A$ because each inclusion
  \(C_n \subseteq A_n\) is regular.
\end{proof}

\begin{remark}
  Lemma~\ref{lem-LineaSpanBasisSlices} implies that~$S$ is a
  saturated grading for~$A$ (see \cite{Kwasniewski-Meyer:Cartan}*{Definition~2.1}).  Then
  \cite{Kwasniewski-Meyer:Cartan}*{Remark 2.8} shows that there is a canonical surjective
  $^*$\nobreakdash-homomorphism $U\colon C \rtimes S \rightarrow A$.
\end{remark}

\begin{lemma}
  \label{lem-inductiveLimExpAgreesWithCanon}
  Let $U\colon C\rtimes S\to A$ be the universal surjective
  $^*$-homomorphism and let $EL\colon C\rtimes S\to \Mloc(C)$ be the
  canonical generalized conditional expectation defined in
  \cite{Kwasniewski-Meyer:Cartan}*{Proposition~2.9}.  Then \(EL = P\circ U\), and
  so~\(EL\) takes values in $C\subseteq \Mloc(C)$.
\end{lemma}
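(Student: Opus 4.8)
The plan is to recognise both $EL$ and $P\circ U$ as generalized expectations $C\rtimes S\to\Mloc(C)$ that restrict to the identity on~$C$, to verify that they agree on a dense subspace of $C\rtimes S$, and then to observe that since $P\circ U$ manifestly takes values in~$C$, the equality $EL=P\circ U$ forces $EL$ to do the same. That $P\circ U$ is a generalized expectation is clear: $U$ is a $^*$\nobreakdash-homomorphism which is the identity on~$C$, and $P$ is a conditional expectation onto~$C$ by Lemma~\ref{lem-indLimIsRegWithAlmostFaithfulCondExp}, so $P\circ U$ is completely positive, contractive and restricts to $\mathrm{id}_C$ on~$C$. As $EL$ and $P\circ U$ are bounded linear maps into the C$^*$-algebra $\Mloc(C)$, it suffices to check their equality on a dense subspace.

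By the Remark following Lemma~\ref{lem-LineaSpanBasisSlices} the collection~$S$ is a saturated grading of~$A$, so the linear span of the canonical images in $C\rtimes S$ of the slices $F\in S$ is dense. Each such $F$ is the limit of an inductive system $\{M_n,\phi_n\}$, so its image in $C\rtimes S$ is the closed union of the images of the building-block slices $\mu_n(M_n)$; and since each $M_n$ is the closed linear span of slices $C_n\cdot\{m\}\cdot C_n$ with $m\in N_{A_n}(C_n)$, the Cohen--Hewitt Factorisation Theorem reduces us to verifying $EL=P\circ U$ on the canonical images in $C\rtimes S$ of elements of the form $\mu_n(c_1mc_2)$ with $c_1,c_2\in C_n$ and $m\in N_{A_n}(C_n)$.

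To do so I would compare with the building blocks. By the universal property of the crossed product, the inclusion $C_n\subseteq A_n$ induces a $^*$\nobreakdash-homomorphism $\iota_n\colon C_n\rtimes S_n\to C\rtimes S$ that is compatible with the gradings, covers $\mu_n$ in the sense that $U\circ\iota_n=\mu_n\circ U_n$, and sends the image of $c_1mc_2$ in $C_n\rtimes S_n$ to the image of $\mu_n(c_1mc_2)$ in $C\rtimes S$ we are considering. I would then use two facts from \cite{Kwasniewski-Meyer:Cartan}: first, that the canonical generalized expectation is natural for these maps, in the form $EL\circ\iota_n=\mu_n\circ EL_n$; and secondly, that since $C_n\subseteq A_n$ is a noncommutative Cartan inclusion, $EL_n$ coincides with the given conditional expectation, $EL_n=P_n\circ U_n$, so that $EL_n$ is $C_n$-valued and $\mu_n\circ EL_n$ is meaningful as a map into~$C$. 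Granting these, and using bimodularity of~$P_n$, for $a=c_1mc_2$ one computes
\[
  EL(\iota_n(a))=\mu_n\bigl(EL_n(a)\bigr)=\mu_n\bigl(P_n(a)\bigr)=\mu_n\bigl(c_1P_n(m)c_2\bigr),
\]
while $P(U(\iota_n(a)))=P(\mu_n(a))=\mu_n(P_n(a))=\mu_n(c_1P_n(m)c_2)$ by the defining relation $P_0(\mu_n(a))=\mu_n(P_n(a))$. Thus $EL$ and $P\circ U$ agree on a dense $^*$\nobreakdash-subalgebra of $C\rtimes S$, hence on all of it; and since $P\circ U$ is $C$-valued, so is~$EL$.

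I expect the main obstacle to be the naturality statement $EL\circ\iota_n=\mu_n\circ EL_n$, together with its companion $EL_n=P_n\circ U_n$ for the building block. Both would be proved by opening up the construction of the canonical generalized expectation in \cite{Kwasniewski-Meyer:Cartan}*{Proposition~2.9}: the value of $EL$ on the image of a normaliser is controlled by its \emph{diagonal part}, and one must check that the diagonal part of $m\in N_{A_n}(C_n)$ computed inside~$A_n$ is carried by~$\mu_n$ to the diagonal part of $\mu_n(m)$ computed inside~$A$. This is precisely the point at which the standing hypothesis $P_{n+1}\circ\phi_n=\phi_n\circ P_n$ — equivalently $P\circ\mu_n=\mu_n\circ P_n$, which relies on Lemma~\ref{lem-indLimIsRegWithAlmostFaithfulCondExp} — is used.
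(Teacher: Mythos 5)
Your overall strategy coincides with the paper's: both maps are continuous, so it suffices to compare them on the images of the building-block slices $M_n$, where the $P\circ U$ side reduces, via the defining relation $P\circ\mu_n=\mu_n\circ P_n$, to the computation $\mu_n(P_n(a))$. That half of your argument is fine. The problem is the other half. You route the evaluation of $EL$ through two auxiliary claims, naturality $EL\circ\iota_n=\mu_n\circ EL_n$ and the identification $EL_n=P_n\circ U_n$ for the building block, and you defer the first of these to a sketch. But that naturality statement \emph{is} the lemma: $EL$ on the image of $\mu_n(m)$ is computed from the decomposition of the \emph{limit} slice $F=\overline{\bigcup_k\mu_k(M_k)}$ relative to $C$, whereas $EL_n(m)$ is computed from the decomposition of $M_n$ relative to $C_n$, and relating the two is exactly the nontrivial content. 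Moreover, you locate the difficulty in the wrong place: you say the preservation of diagonal parts is "precisely the point at which $P_{n+1}\circ\phi_n=\phi_n\circ P_n$ is used," but the entwining hypothesis only relates the maps $P_n$ to one another (it is what makes $P$ well defined and gives $P\circ\mu_n=\mu_n\circ P_n$); it says nothing about $EL$, which is built intrinsically from the grading of $C\rtimes S$ and the ideal structure of $C$. Your computation would also quietly require the existence and good behaviour of the maps $\iota_n\colon C_n\rtimes S_n\to C\rtimes S$, which is extra machinery that itself needs verification.

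The paper closes this gap without any naturality statement, by a direct decomposition. Since $C_n\subseteq A_n$ is a noncommutative Cartan inclusion, the slice decomposes as $M_n=(M_n\cap C_n)\oplus M_n\cdot(M_n\cap C_n)^\perp$ by \cite{Kwasniewski-Meyer:Cartan}*{Proposition~2.17}. On the summand $M_n\cap C_n\subseteq C$ both $EL$ and $P_n$ are the identity. On the summand $M_n\cdot(M_n\cap C_n)^\perp$ the map $EL$ vanishes by the construction of the canonical expectation (\cite{Buss-Exel-Meyer:Reduced}*{Lemma~4.5}), while $P_n$ vanishes because $P_n$ preserves slices (\cite{Kwasniewski-Meyer:Cartan}*{Lemma~4.10}), so that
\[
P_n\bigl(M_n\cdot(M_n\cap C_n)^\perp\bigr)\subseteq (M_n\cap C_n)\cdot(M_n\cap C_n)^\perp=\{0\}.
\]
This is the step your sketch would have to reproduce; as written, your proposal asserts it rather than proves it, so there is a genuine gap at the heart of the argument even though the surrounding reductions are correct.
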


\begin{proof}
  By Lemma~\ref{lem-LineaSpanBasisSlices}, elements of~$S$ span a
  dense subset of~$A$.  Since conditional expectations are linear
  and continuous, it suffices to consider restrictions to building
  blocks of inductive systems of slices.  Consider such a building
  block~$M_n$.  It suffices to show that $EL(k)=P_n(k)$ for all
  $k\in M_n$.  On $M_n\cap C_n$ both $P_n$ and~$EL$ restrict to the
  identity map as this is contained in~$C_n$.  The expectation~$EL$
  is zero on the complement $M_n\cdot (M_n\cap C_n)^\perp$ by
  construction (see \cite{Buss-Exel-Meyer:Reduced}*{Lemma~4.5}).  To see that the
  expectation~$P_n$ is zero on $M_n\cdot (M_n\cap C_n)^\perp$, note
  that $P_n$ preserves slices by \cite{Kwasniewski-Meyer:Cartan}*{Lemma~4.10}.  Then
  $P_n(M_n\cdot (M_n\cap C_n)^\perp)\subseteq P(M_n)\cdot (M_n\cap
  C_n)^\perp\subseteq (M_n\cap C_n)\cdot (M_n\cap C_n)^\perp=\{0\}$.
  Since $C_n\subseteq A_n$ is a Cartan inclusion, the slice~$M_n$
  decomposes as $M_n=M_n\cap C_n\oplus M_n\cdot (M_n\cap C_n)^\perp$
  by \cite{Kwasniewski-Meyer:Cartan}*{Proposition~2.17}.  So $P_n=EL$ on each slice.  Then
  $P$ and~$EL$ agree on the inductive limit slices belonging to~$S$.
  Thus \(EL = P\circ U\).
\end{proof}

\begin{corollary}
  \label{cor-IndLimExpSymmetric}
  The inductive limit expectation~$P$ is symmetric.
\end{corollary}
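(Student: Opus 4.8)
The plan is to deduce the symmetry of $P$ from the symmetry of the canonical generalized conditional expectation $EL$ on $C\rtimes S$, which I would in turn extract from the crossed-product machinery. Recall from Lemma~\ref{lem-inductiveLimExpAgreesWithCanon} that $EL = P\circ U$, where $U\colon C\rtimes S\to A$ is the universal \emph{surjective} $^*$\nobreakdash-homomorphism. So, given $b\in A$, pick $\beta\in C\rtimes S$ with $U(\beta)=b$; since $U$ is a $^*$\nobreakdash-homomorphism, $U(\beta^*\beta)=b^*b$ and $U(\beta\beta^*)=bb^*$, and therefore $P(b^*b)=EL(\beta^*\beta)$ and $P(bb^*)=EL(\beta\beta^*)$. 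Hence $P(b^*b)=0$ if and only if $EL(\beta^*\beta)=0$, and similarly $P(bb^*)=0$ if and only if $EL(\beta\beta^*)=0$; thus the symmetry of $P$ follows at once once we know that $EL$ is symmetric, that is, $EL(\beta^*\beta)=0$ if and only if $EL(\beta\beta^*)=0$.

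For the symmetry of $EL$, I would appeal to the general theory of crossed products by inverse semigroups: $EL$ descends along the canonical quotient map to the canonical generalized conditional expectation on the essential crossed product $C\rtimes_{\mathrm{ess}}S$, and that expectation is symmetric (indeed almost faithful) by \cite{Kwasniewski-Meyer:Cartan}. Since symmetry is inherited under precomposition with a $^*$\nobreakdash-homomorphism $q$ — one has $q(\beta^*\beta)=q(\beta)^*q(\beta)$ — this yields the symmetry of $EL$, and hence of $P$. Alternatively, one could try to argue the symmetry of $EL$ directly from the slice decomposition used in the proof of Lemma~\ref{lem-inductiveLimExpAgreesWithCanon}, where $EL$ restricts on each building-block slice $M_n$ to the projection onto $M_n\cap C_n$ and the $P_n$ are faithful, hence symmetric; but controlling the cross terms between distinct slices makes that route more delicate.

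The crux is precisely the symmetry of $EL$. I want to emphasise that this does \emph{not} follow softly from the symmetry of the building-block expectations $P_n$: passing to the inductive limit destroys the uniform control one would need in order to relate $P_n(c^*c)$ and $P_n(cc^*)$ along an approximating sequence, so the limit structure alone is insufficient and the (essential) crossed-product picture is genuinely being used. Granting the preceding lemmas, however, the remaining argument is a one-liner.
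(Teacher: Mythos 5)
Your argument is correct and is essentially the paper's proof: both pull the symmetry of $P$ back from the symmetry of the canonical expectation $EL$ via $EL=P\circ U$ and the surjectivity of $U$. The only quibble is the reference for the symmetry of $EL$: the paper cites \cite{Kwasniewski-Meyer:Essential}*{Theorem~4.11} directly, which is cleaner than routing through the essential crossed product, and the result lives in that paper rather than in \cite{Kwasniewski-Meyer:Cartan}.
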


\begin{proof}
  Lemma~\ref{lem-inductiveLimExpAgreesWithCanon} says that
  $EL=P\circ U$.  Fix $a\in A$ with $P(a^*a)=0$.  There is
  $a_0\in C\rtimes S$ with $U(a_0)=a$.  By \cite{Kwasniewski-Meyer:Essential}*{Theorem~4.11},
  the expectation~$EL$ is symmetric.  So
  $EL(a_0^*a_0)=P(U(a_0)^*U(a_0))=P(a^*a)=0$ implies
  $EL(a_0a_0^*)=0$.  Then
  $P(aa^*)=P(U(a_0)U(a_0)^*)=EL(a_0a_0^*)=0$.  This shows that~$P$
  is symmetric.
\end{proof}

\begin{corollary}
  \label{cor-PisFaithful}
  The inductive limit expectation~$P$ is faithful.
\end{corollary}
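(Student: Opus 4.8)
The plan is to derive faithfulness from the two facts already at hand: $P$ is almost faithful (Lemma~\ref{lem-indLimIsRegWithAlmostFaithfulCondExp}) and $P$ is symmetric (Corollary~\ref{cor-IndLimExpSymmetric}). Recall from the proof of Lemma~\ref{lem-indLimIsRegWithAlmostFaithfulCondExp} (and \cite{Kwasniewski:Exel_crossed}*{Proposition~2.2}) that almost faithfulness of~$P$ is equivalent to the vanishing of $\mathcal{N}=\{a\in A : P(b^*a^*ab)=0 \text{ for all } b\in A\}$, the largest ideal of~$A$ contained in $\ker P$. So it suffices to prove that the set $L:=\{a\in A : P(a^*a)=0\}$ is a two-sided ideal of~$A$: once we know this, then for $a\in L$ and $b\in A$ we have $ab\in L$, hence $P(b^*a^*ab)=P((ab)^*(ab))=0$, so $L\subseteq\mathcal{N}=\{0\}$, and $L=\{0\}$ is precisely faithfulness of~$P$.

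First I would verify that~$L$ is a closed left ideal. Closedness is immediate from continuity of~$P$ and of $a\mapsto a^*a$. For the left-ideal property, given $a\in L$ and $c\in A$, the operator inequality $a^*c^*ca\le\norm{c}^2\,a^*a$ combined with positivity of~$P$ yields $0\le P((ca)^*(ca))\le\norm{c}^2\,P(a^*a)=0$, so $ca\in L$.

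The key step is to promote ``left ideal'' to ``two-sided ideal'', and this is exactly where symmetry is used. By Corollary~\ref{cor-IndLimExpSymmetric}, for every $a\in A$ we have $P(a^*a)=0$ if and only if $P(aa^*)=0$; thus $a\in L\iff a^*\in L$, i.e.\ $L$ is self-adjoint. A closed self-adjoint left ideal is automatically two-sided: if $a\in L$ and $c\in A$, then $a^*\in L$, hence $c^*a^*\in L$, hence $ac=(c^*a^*)^*\in L^*=L$. Combined with the previous paragraph this gives that~$L$ is a two-sided ideal, and the argument concludes as laid out in the first paragraph.

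There is essentially no remaining obstacle here: the substantive work was already carried out in establishing that~$P$ is symmetric (Corollary~\ref{cor-IndLimExpSymmetric}), which in turn rested on identifying~$P$ with the canonical generalized expectation~$EL$ of the slice grading (Lemma~\ref{lem-inductiveLimExpAgreesWithCanon}) and on symmetry of~$EL$. The only subtlety worth flagging is that symmetry---not merely almost faithfulness---is what turns~$L$ into a two-sided rather than a one-sided ideal; without it one cannot conclude $L\subseteq\mathcal{N}$.
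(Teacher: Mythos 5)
Your proof is correct and follows the paper's strategy exactly: combine almost faithfulness (Lemma~\ref{lem-indLimIsRegWithAlmostFaithfulCondExp}) with symmetry (Corollary~\ref{cor-IndLimExpSymmetric}) to conclude faithfulness. The only difference is that the paper simply cites \cite{Kwasniewski-Meyer:Essential}*{Corollary~3.8} for the implication ``almost faithful and symmetric implies faithful,'' whereas you supply a correct self-contained proof of that implication; note that you never verify that $L$ is a linear subspace, but this does not matter, since the argument only uses closure of $L$ under one-sided multiplication and under the involution, both of which you do establish.
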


\begin{proof}
  By Lemma~\ref{lem-indLimIsRegWithAlmostFaithfulCondExp} and
  Corollary~\ref{cor-IndLimExpSymmetric}, $P$ is almost faithful and
  symmetric.  Therefore, it is faithful by
  \cite{Kwasniewski-Meyer:Essential}*{Corollary~3.8}.
\end{proof}

\begin{theorem}
  \label{theo-MainTheorem}
  Let $C_n\subseteq A_n$ be aperiodic noncommutative Cartan
  inclusions.  Let $\phi_n\colon A_n \rightarrow A_{n+1}$ be
  injective and nondegenerate $^*$\nobreakdash-homomorphisms that
  satisfy $\phi_n(C_n)\subseteq C_{n+1}$,
  $\phi(N_{A_n}(C_n))\subseteq N_{A_{n+1}}(C_{n+1})$, and
  $P_{n+1}\circ \phi_n=\phi_n \circ P_n$.  Then
  $C=\varinjlim(C_n,\phi_n) \subseteq A=\varinjlim(A_n,\phi_n)$ is a
  noncommutative Cartan inclusion.  If, in addition, $C$ is
  essentially separable, essentially simple or essentially of
  Type~I, then the inclusion $C\subseteq A$ is aperiodic.
\end{theorem}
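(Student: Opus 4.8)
The plan is to establish the first assertion by verifying Exel's four axioms for a nondegenerate noncommutative Cartan inclusion directly from the preceding lemmas, and to establish the second assertion by proving that $C\subseteq A$ detects ideals and then upgrading this, under the stated hypothesis on~$C$, to aperiodicity of the $C$-bimodule $A/C$ by means of \cite{MR4485960}.

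For the first assertion, regularity of $C\subseteq A$ and the fact that $P$ is a conditional expectation are established in Lemma~\ref{lem-indLimIsRegWithAlmostFaithfulCondExp}, faithfulness of~$P$ is Corollary~\ref{cor-PisFaithful}, and nondegeneracy of the inclusion — equivalently, that~$C$ contains an approximate unit for~$A$ — is part of the standing assumptions, coming from nondegeneracy of the connecting maps together with nondegeneracy of each $C_n\subseteq A_n$. The only remaining axiom is triviality of virtual commutants of~$C$ in~$A$, and for this I would feed the structure already built into the Kwa\'sniewski--Meyer framework. By Lemma~\ref{lem-LineaSpanBasisSlices} and the remark following it, $S$ is a saturated grading of~$A$ by slices with canonical surjection $U\colon C\rtimes S\to A$; by Lemma~\ref{lem-inductiveLimExpAgreesWithCanon}, the canonical generalised expectation $EL\colon C\rtimes S\to\Mloc(C)$ equals $P\circ U$ and takes values in~$C$; and by Corollary~\ref{cor-PisFaithful}, $P$ is faithful, so that $\ker U$ coincides with the set $\{x\in C\rtimes S: EL(x^*x)=0\}$ and hence $EL$ descends through~$U$ to the faithful conditional expectation $P\colon A\to C$. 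The characterisation of noncommutative Cartan inclusions of Exel and Kwa\'sniewski--Meyer (\cite{Exel:noncomm.cartan}, \cite{Kwasniewski-Meyer:Cartan}) then applies: a nondegenerate regular inclusion whose canonical grading by slices carries a faithful conditional expectation onto the subalgebra as its canonical generalised expectation is a noncommutative Cartan inclusion. In particular its virtual commutants are trivial, completing the first assertion.

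For the second assertion I would first show that $C\subseteq A$ \emph{detects ideals}. Let $J\subseteq A$ be a nonzero closed two-sided ideal and put $J_n=J\cap\mu_n(A_n)$, an ideal of $\mu_n(A_n)$. By \cite{Davidson:Cstar_example}*{Lemma III.4.1} we have $J=\overline{\bigcup_n J_n}$, so $J_n\neq\{0\}$ for some~$n$. Identifying $J_n$ with an ideal of~$A_n$ and using that the subalgebra of an aperiodic inclusion detects ideals (a basic consequence of aperiodicity, see \cite{MR4485960}), we obtain $J_n\cap C_n\neq\{0\}$, hence $\mu_n(J_n\cap C_n)\subseteq J\cap C$ is nonzero. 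Now $C\subseteq A$ is a nondegenerate inclusion of C$^*$-algebras carrying the faithful conditional expectation~$P$, and~$C$ is essentially separable, essentially simple or essentially of Type~I; under any one of these hypotheses, the results of \cite{MR4485960} referred to in Section~\ref{sec:prelim} show that, in the presence of such a conditional expectation, detection of ideals forces the $C$-bimodule $A/C$ to be aperiodic. Hence the inclusion $C\subseteq A$ is aperiodic.

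I expect the main difficulty to lie in the second assertion. The inheritance arguments are routine — ideals of~$A$, quotients of~$A$, and the bimodule $A/C$ are again the inductive limits of the corresponding objects over the building blocks, the relevant identifications (for instance $\mu_n(A_n)\cap C=\mu_n(C_n)$) following from injectivity of the connecting maps and the entwining $P_{n+1}\circ\phi_n=\phi_n\circ P_n$ — but some care is needed to pin down exactly which property of the limit inclusion (plain detection of ideals as above, its residual version, or uniqueness of the pseudo-expectation for $C\subseteq A$) is available for the limit and is simultaneously strong enough, under the essential separability, simplicity or Type~I hypothesis on~$C$, to yield aperiodicity via the precise statements of \cite{MR4485960}. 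The first assertion, by contrast, is largely bookkeeping: the substantive content has already been assembled in Lemmas~\ref{lem-indLimIsRegWithAlmostFaithfulCondExp}--\ref{lem-inductiveLimExpAgreesWithCanon} and Corollaries~\ref{cor-IndLimExpSymmetric}--\ref{cor-PisFaithful}, and it remains only to invoke the Kwa\'sniewski--Meyer characterisation of noncommutative Cartan inclusions.
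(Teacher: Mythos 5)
There is a genuine gap, and it sits at the heart of both assertions. For the first assertion you claim that a nondegenerate regular inclusion whose canonical grading by slices carries a faithful conditional expectation equal to the canonical generalised expectation is automatically a noncommutative Cartan inclusion. No such characterisation exists: if it did, Exel's axiom on triviality of virtual commutants would be redundant. The inclusion $\mathbb{C}\subseteq C^*_\red(G)$ for a nontrivial discrete group $G$ is nondegenerate, regular, graded by the one\nobreakdash-dimensional slices $\mathbb{C}g$, and carries a faithful canonical expectation, yet every element of $C^*_\red(G)$ defines a virtual commutant of $\mathbb{C}$. What \cite{Kwasniewski-Meyer:Cartan}*{Theorem~4.3} actually requires is that the action of the inverse semigroup $S$ of limit slices be \emph{purely outer}, and this is where the aperiodicity of the building blocks must enter. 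The paper obtains pure outerness from \cite{MR4485960}*{Proposition~6.7}, whose hypothesis is that $C$ detects ideals in every intermediate C$^*$-algebra $C\subseteq C\rtimes_\red T\subseteq C\rtimes_\red S$ for inverse subsemigroups $T$ containing all idempotents --- that is, a \emph{residual} detection statement, not detection of ideals of $A$ alone. Your inductive-limit detection argument (restrict a nonzero ideal $J\subseteq A$ to $J\cap\mu_n(A_n)$ and use that aperiodic inclusions detect ideals) is fine as far as it goes, but it only treats ideals of $A$: for an intermediate algebra $C\subseteq B\subseteq A$ and an ideal $I\subseteq B$, the set $I\cap\mu_n(A_n)$ is not an ideal of $\mu_n(A_n)$, so the decomposition $I=\overline{\bigcup_n I_n}$ and the appeal to detection in the building blocks both fail.

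The missing idea is the paper's argument for detection in intermediate algebras: given a nonzero ideal $I\subseteq B$ with $I\cap C=\{0\}$ and a positive $b\in I$ with $\norm{P(b)}>\epsilon$, approximate $b$ by some positive $b_n\in A_n$, extend the identity on $C_n$ through the quotient $B\to B/I$ and then to all of $A$ using injectivity of Hamana's envelope $I(C_n)$, and observe that the resulting map $R$ and the expectation $P$ restrict to two \emph{distinct} pseudo-expectations for $C_n\subseteq A_n$ (distinct because $R(b)=0$ forces $\norm{R(b_n)}<\epsilon/3<\norm{P(b_n)}$). This contradicts the uniqueness of pseudo-expectations for aperiodic inclusions, \cite{MR4485960}*{Theorem~3.6}. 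This single argument is what powers the whole theorem: it yields pure outerness, hence the Cartan property via \cite{Kwasniewski-Meyer:Cartan}*{Theorem~4.3}, and then aperiodicity under the essential separability, simplicity or Type~I hypothesis follows from pure outerness via the implications in \cite{MR4485960}*{Figure~1} --- not, as you suggest, from plain detection of ideals in $A$ together with the existence of a faithful expectation. You correctly sensed that the choice between ``plain detection, residual detection, or uniqueness of pseudo-expectations'' is delicate, but the resolution is that the residual version is both what is needed and what must be proved, and it is needed already for the first assertion.
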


\begin{proof}
  We are going to prove by contradiction that~$C$ detects ideals in
  all intermediate C$^*$-algebras $C \subseteq B \subseteq A$. Together with the results in~\cite{MR4485960}, this will imply our claims.  So
  assume that this fails.  Then there is a nonzero ideal
  $I \subseteq B$ with $I \cap C=\{0\}$.
  Fix a nonzero positive element $b \in I$.
  Corollary~\ref{cor-PisFaithful} says that the conditional
  expectation~$P$ is faithful.  So there is $\epsilon>0$ with
  $\norm{P(b)}>\epsilon$.  Then there are $n\in \mathbb{N}$ and a
  positive element $b_n \in A_n$ with $\norm{b_n-b} < \epsilon/3$.

  Let $I(C_n)$ denote Hamana's injective envelope of~$C_n$
  (see~\cite{Hamana:Injective-Envelope-Cstar}) and let $\pi\colon B \rightarrow B/I$ be the
  quotient map.  We have got the following commutative diagram:
  \[
    \begin{tikzcd}
      A_n \arrow[r,hook] & A & \\
      C_n \arrow[d,hook] \arrow[rd,hook] \arrow[u, hook] &
      B\arrow[u,hook] \arrow[r,twoheadrightarrow,"\pi"] &
      B/I \\
      I(C_n) & C. \arrow[u,hook] \arrow[ur,hook]&
    \end{tikzcd}
  \]
  Since~$I(C_n)$ is injective, the identity homomorphism on~$C_n$
  extends to a completely positive contractive map
  $Q\colon B/I \rightarrow I(C_n)$.  Next, \(Q\circ \pi\) extends to
  a completely positive contraction $R\colon A\rightarrow I(C_n)$.
  By construction, $R(b)=Q(\pi(b))=0$ and then
  $\norm{R(b_n)}=\norm{R(b_n-b)}\le\norm{b_n-b} < \epsilon/3$.  The
  reverse triangle inequality implies
  $\epsilon/3 > \norm{P(b_n)-P(b)}\ge
  \bigl|\norm{P(b_n)}-\norm{P(b)}\bigr|$ and then
  $\norm{P(b_n)}>2\epsilon/3>\epsilon/3>\norm{R(b_n)}$.  Hence
  $P\vert_{A_n} \neq R\vert_{A_n}$.  However, both \(P\) and~\(R\)
  are completely positive contractions extending the identity map
  on~\(C_n\), which makes them generalized expectations for the
  inclusion \(C_n \subseteq A_n\).  It is well known that
  \(\Mloc(C_n) \subseteq I(C_n)\).  So \(P\) and~\(R\) produce two
  different pseudo-expectations for the inclusion
  $C_n \subseteq A_n$.  Then \cite{MR4485960}*{Theorem 3.6} implies that
  this inclusion is not aperiodic, in contradiction to our
  assumption.  This finishes the proof that~\(C\) detects ideals in
  any intermediate C$^*$-algebra~\(B\).

  The canonical expectation \(EL\) on \(C\rtimes S\) takes values
  in~\(C\) by Lemma~\ref{lem-inductiveLimExpAgreesWithCanon}.
  Therefore, the reduced and essential crossed products agree for
  the relevant action of~\(S\) on~\(A\).  Since \(EL= P\circ U\)
  and~\(P\) is faithful, it also follows that the canonical
  ${}^*$-homomorphism $U\colon C\rtimes S\to A$ descends to an
  isomorphism $A\cong C \rtimes_\red S$.  If $T\subseteq S$ is an
  inverse subsemigroup that contains all idempotents of~$S$, then
  $C \subseteq C\rtimes_\red T \subseteq C\rtimes_\red S$ is an
  intermediate C$^*$-algebra, and we have shown that~\(C\) detects
  ideals in it.  Now \cite{MR4485960}*{Proposition~6.7} shows that the
  action of~$S$ is purely outer.  Then the inclusion $C\subseteq A$
  is a noncommutative Cartan inclusion by \cite{Kwasniewski-Meyer:Cartan}*{Theorem~4.3}.
  If~$C$ also contains an essential ideal that is separable, simple,
  or of Type~I, then the inclusion $C\subseteq A$ is even aperiodic
  by the conditional implications in \cite{MR4485960}*{Figure~1}.
\end{proof}

\begin{remark}
  \label{rem-SubsumeCommutative}
  The properties of being of Type~I, separable, or simple each pass
  to inductive limits of C$^*$-algebras.  Thus, if all the
  noncommutative Cartan subalgebras~$C_n$ in the building blocks are
  of Type I, separable, or simple, then the inclusion $C\subseteq A$
  is again aperiodic.  This, however, may break down if the building
  block subalgebras are only essentially of Type~I, simple or
  separable, because essential ideals in~\(C_n\) need not survive to
  ideals in~\(C\).

  In particular, if each~\(C_n\) is commutative, then it is of
  Type~I and so there is no difference between aperiodic and purely
  outer actions.  Hence Theorem~\ref{theo-MainTheorem} subsumes the
  setting of \cite{Li:Classifiable_Cartan}*{Theorem~1.10}.  Moreover the argument we give does not rely on passing to \'etale twisted groupoids.
\end{remark}

\begin{remark}
    In the case where each $C_n=C_0(X_n)$ is commutative, consider the Gelfand dual continuous map $f_n:X_{n+1}\to X_n$ inducing $\phi_n|_{C_n}$. 
    If $f_n$ is an open map, then the inclusion $C_n\subseteq C_{n+1}$ induced by $\phi_n|_{C_n}$ is anti-aperiodic, that is, contains no non-zero aperiodic $C_n$-bimodules.
    Indeed, for any non-zero function $g\in C_0(X_{n+1})$ there is an open subset $V\subseteq X_{n+1}$ where $|g(x)|>||g||/2$ for all $x\in V$.
    Since $f_n$ is open we see that $C_0(f_n(V))$ is an ideal in $C_0(X_n)$, and for any $h\in C_0(f_n(V))$ with $||h||=1$ we have $||hgh||\geq\sup_{x\in V} |g(x)||h(f_n(x))|^2>||g||/2$, so can never satisfy Kishimoto's condition.
    
    In this situation we have by \cite{MR4485960}*{Proposition~3.9} that generalised expectations for the inclusion $C_n\subseteq A_n$ taking values in $C_{n+1}$ are unique, since the inclusion $C_n\subseteq A_n$ is aperiodic, and the inclusion $C_n \subseteq C_{n+1}$ is anti-aperiodic.
    The maps $\phi_n\circ P_n$ and $P_{n+1}\circ\phi_n$ are both such generalised expectations, so must then be equal.
    Thus our ${}^*$-homomorphisms $A_n\to A_{n+1}$ entwine conditional expectations automatically if the Gelfand duals of the restrictions $C_n\to C_{n+1}$ are open.
    
    If the Gelfand dual map is not open then inclusions of commutative C$^*$-algebras may not be anti-aperiodic. 
    For example consider $C[0,1]\subseteq C[0,2]$ induced by the continuous function $f:[0,2]\to[0,1]$, defined by
    $$f(t):=\begin{cases}t,&t\leq 1\\
    1,&t>1.
    \end{cases}$$
    The $C[0,1]$-subbimodule $C_0(1,2]$ is annihilated by the essential ideal $C_0[0,1)\subseteq C[0,1]$.
    By \cite{Kwasniewski-Meyer:Essential}*{Lemma~5.12} the bimodule $C_0(1,2]$ is a non-zero aperiodic $C[0,1]$-subbimodule of $C[0,2]$, hence the inclusion is not anti-aperiodic. 
\end{remark}

\begin{bibdiv}
  \begin{biblist}
    \bib{Buss-Exel-Meyer:Reduced}{article}{
  author={Buss, Alcides},
  author={Exel, Ruy},
  author={Meyer, Ralf},
  title={Reduced \(C^*\)\nobreakdash -algebras of Fell bundles over inverse semigroups},
  journal={Israel J. Math.},
  date={2017},
  volume={220},
  number={1},
  pages={225--274},
  issn={0021-2172},
  review={\MR {3666825}},
  doi={10.1007/s11856-017-1516-9},
}

\bib{Davidson:Cstar_example}{book}{
  author={Davidson, Kenneth R.},
  title={$C^*$\nobreakdash -algebras by example},
  series={Fields Institute Monographs},
  volume={6},
  publisher={Amer. Math. Soc.},
  place={Providence, RI},
  date={1996},
  pages={xiv+309},
  isbn={0-8218-0599-1},
  review={\MR {1402012}},
  eprint={https://bookstore.ams.org/fim-6/},
}

\bib{Exel:noncomm.cartan}{article}{
  author={Exel, Ruy},
  title={Noncommutative Cartan subalgebras of $C^*$\nobreakdash -algebras},
  journal={New York J. Math.},
  issn={1076-9803},
  volume={17},
  date={2011},
  pages={331--382},
  eprint={http://nyjm.albany.edu/j/2011/17-17.html},
  review={\MR {2811068}},
}

\bib{Feldman-Moore:Ergodic_II}{article}{
  author={Feldman, Jacob},
  author={Moore, Calvin C.},
  title={Ergodic equivalence relations, cohomology, and von Neumann algebras. II},
  journal={Trans. Amer. Math. Soc.},
  volume={234},
  date={1977},
  number={2},
  pages={325--359},
  issn={0002-9947},
  review={\MR {578730}},
  doi={10.2307/1997925},
}

\bib{Hamana:Injective-Envelope-Cstar}{article}{
  author={Hamana, Masamichi},
  title={Injective envelopes of $C^*$-algebras},
  journal={J. Math. Soc. Japan},
  volume={31},
  date={1979},
  number={1},
  pages={181--197},
  issn={0025-5645},
  review={\MR {519044}},
  doi={10.2969/jmsj/03110181},
}

\bib{Hewitt-Ross:Abstract_harmonic_analysisII}{book}{
  author={Hewitt, Edwin},
  author={Ross, Kenneth A.},
  title={Abstract harmonic analysis. Vol.~II},
  series={Grundlehren der Mathematischen Wissenschaften},
  note={Structure and analysis for compact groups. Analysis on locally compact abelian groups},
  publisher={Springer},
  place={Berlin},
  date={1975},
  pages={901},
  review={\MR {0396828}},
  doi={10.1007/978-3-642-62008-9},
}

\bib{Kumjian:Diagonals}{article}{
  author={Kumjian, Alexander},
  title={On $C^*$\nobreakdash -diagonals},
  journal={Canad. J. Math.},
  volume={38},
  date={1986},
  number={4},
  pages={969--1008},
  issn={0008-414X},
  review={\MR {854149}},
  doi={10.4153/CJM-1986-048-0},
}

\bib{Kwasniewski:Exel_crossed}{article}{
  author={Kwa\'sniewski, Bartosz Kosma},
  title={Exel's crossed product and crossed products by completely positive maps},
  journal={Houston J. Math.},
  volume={43},
  date={2017},
  number={2},
  pages={509--567},
  issn={0362-1588},
  review={\MR {3690127}},
}

\bib{Kwasniewski-Meyer:Cartan}{article}{
  author={Kwa\'sniewski, Bartosz Kosma},
  author={Meyer, Ralf},
  title={Noncommutative Cartan \(\textup {C}^*\)\nobreakdash -subalgebras},
  journal={Trans. Amer. Math. Soc.},
  volume={373},
  date={2020},
  number={12},
  pages={8697--8724},
  issn={0002-9947},
  review={\MR {4177273}},
  doi={10.1090/tran/8174},
}

\bib{Kwasniewski-Meyer:Essential}{article}{
  author={Kwa\'sniewski, Bartosz Kosma},
  author={Meyer, Ralf},
  title={Essential crossed products by inverse semigroup actions: Simplicity and pure infiniteness},
  journal={Doc. Math.},
  date={2021},
  volume={26},
  pages={271--335},
  doi={10.25537/dm.2021v26.271-335},
  review={\MR {4246403}},
}

\bib{MR4485960}{article}{
   author={Kwa\'{s}niewski, Bartosz Kosma},
   author={Meyer, Ralf},
   title={Aperiodicity: the almost extension property and uniqueness of
   pseudo-expectations},
   journal={Int. Math. Res. Not. IMRN},
   date={2022},
   number={18},
   pages={14384--14426},
   issn={1073-7928},
   review={\MR{4485960}},
   doi={10.1093/imrn/rnab098},
}

\bib{Lawson:InverseSemigroups}{book}{
  author={Lawson, Mark V.},
  title={Inverse semigroups: the theory of partial symmetries},
  publisher={World Scientific Publishing Co.},
  place={River Edge, NJ},
  date={1998},
  pages={xiv+411},
  isbn={981-02-3316-7},
  review={\MR{1694900}},
  doi={10.1142/9789812816689},
}

\bib{Li:Dynamic_quasi-isometry}{article}{
  author={Li, Xin},
  title={Dynamic characterizations of quasi-isometry and applications to cohomology},
  journal={Algebr. Geom. Topol.},
  volume={18},
  date={2018},
  number={6},
  pages={3477--3535},
  issn={1472-2747},
  review={\MR {3868227}},
  doi={10.2140/agt.2018.18.3477},
}

\bib{Li:Classifiable_Cartan}{article}{
  author={Li, Xin},
  title={Every classifiable simple $\mathrm {C}^*$-algebra has a Cartan subalgebra},
  journal={Invent. Math.},
  volume={219},
  date={2020},
  number={2},
  pages={653--699},
  issn={0020-9910},
  review={\MR {4054809}},
  doi={10.1007/s00222-019-00914-0},
}

\bib{Li-Raad:Diagonals_AH}{article}{
  author={Li, Xin},
  author={Raad, Ali Imad},
  title={Constructing C*-diagonals in AH-algebras},
  publisher={arXiv},
  year={2021},
  doi={10.48550/ARXIV.2111.11765},
}

\bib{Renault:Cartan.Subalgebras}{article}{
  author={Renault, Jean},
  title={Cartan subalgebras in $C^*$\nobreakdash -algebras},
  journal={Irish Math. Soc. Bull.},
  number={61},
  date={2008},
  pages={29--63},
  issn={0791-5578},
  review={\MR {2460017}},
  eprint={http://www.maths.tcd.ie/pub/ims/bull61/S6101.pdf},
}

\bib{Vershik:Decompositions}{article}{
  author={Vershik, A. M.},
  title={Nonmeasurable decompositions, orbit theory, algebras of operators},
  journal={Soviet Math. Dokl.},
  volume={12},
  date={1971},
  pages={1218--1222},
  issn={0197-6788},
  review={\MR {0287331}},
}

  \end{biblist}
\end{bibdiv}

\end{document}